\numberwithin{equation}{section}
\newtheorem{satz}{Satz}
\newtheorem{proposition}{Proposition}
\newtheorem{theorem}[satz]{Theorem}
\newtheorem{definition}{Definition}
\newtheorem{lemma}[satz]{Lemma}
\newtheorem{corollary}[satz]{Corollary}
\newtheorem{conjecture}{Conjecture}
\theoremstyle{definition}
\newtheorem*{remark}{Remark}
\newcommand{\tensor}{\otimes}
\newcommand{\map}[1]{\stackrel{#1}{\longrightarrow}}
\newcommand{\incl}[1]{\stackrel{#1}{\hookrightarrow}}
\def\1halb{\frac{1}{2}}
\def\tto{\twoheadrightarrow}
\def\GL{\textsf{GL}}
\def\GSp{\mathsf{GSp}}
\def\GSO{\mathsf{GSO}}
\def\SL{\textsf{SL}}
\def\PGL{\textsf{PGL}}
\def\PSp{\mathsf{PSp}}
\def\PSO{\mathsf{PSO}}
\def\gl{\mathfrak{gl}}
\DeclareMathOperator{\Hom}{Hom}
\DeclareMathOperator{\Spec}{Spec}
\DeclareMathOperator{\Aut}{Aut}
\DeclareMathOperator{\End}{End}
\DeclareMathOperator{\Sym}{Sym}
\DeclareMathOperator{\Lie}{Lie}
\def\cHom{\mathcal{H}\textrm{om}}
\DeclareMathOperator{\Char}{char}
\DeclareMathOperator{\Ht}{ht}
\DeclareMathOperator{\Bun}{Bun}
\def\sxymat{\xymatrix@C=1.5ex@R=0.8ex}
\def\grp{$\xymatrix{ R\times_{X}R  \ar[r]^-{\mu} & R \ar@<1ex>[r]^-{s}\ar@<-1ex>[r]_-{t} & X}$}
\def\dar{\ar@<-0.5ex>[r]\ar@<0.5ex>[r]}
\def\tar{\ar[r]\ar@<1ex>[r]\ar@<-1ex>[r]}
\newcommand{\dmap}[2]{\ar@<-0.5ex>[r]_-{#2}\ar@<0.5ex>[r]^-{#1}}
\newcommand{\dotarrow}[2]{\xymatrix{{#1}\ar@{..>}[r]&{#2}}} 
\def\cart{\ar@{}[dr]|{\square}}
\def\cE{\mathcal{E}}
\def\cL{\ensuremath{\mathcal{L}}}
\def\cO{\mathcal{O}}
\def\cP{\mathcal{P}}
\def\cg{\mathfrak{g}}
\def\cu{\mathfrak{u}}
\def\cp{\mathfrak{p}}
\def\cq{\mathfrak{q}}
\def\bG{{\mathbb G}}
\def\bQ{{\mathbb Q}}
\def\bF{{\mathbb F}}
\begin{document}
\title[Behrend's conjecture]{Bounds for Behrend's conjecture on the canonical reduction}
\author{Jochen Heinloth}
\address{University of Amsterdam, Korteweg-de Vries Institute for Mathematics, Plantage Muidergracht 24, 1018 TV Amsterdam, The Netherlands}
\email{heinloth@science.uva.nl}

\begin{abstract}
We prove Behrend's conjecture on the rationality of the canonical reduction of principal bundles and reductive group schemes for classical groups (except for one case in characteristic $2$) and give new bounds for the conjecture for exceptional groups. However we find a counterexample in the case of $G_2$-bundles in characteristic $2$.
\end{abstract}
\maketitle
It is a classical result that the Harder-Narasimhan filtration of a vector bundle is defined over any base field.
In \cite{BehrendSemistable} Behrend made a cohomological conjecture which implies the rationality of the canonical destabilizing reduction for arbitrary reductive group schemes. In \cite{semistable} we have shown that this conjecture also implies the projectivity of the moduli spaces of semistable bundles on a curve, which have been constructed by Gomez, Langer, Schmitt and Sols in \cite{GLSS}. The purpose of this note is twofold. First, we prove that Behrend's conjecture holds for classical groups (except for orthogonal groups in characteristic 2) and improve the known bounds on the characteristic of the ground field in the case of exceptional groups.
Second, we give a counterexample to the conjecture for the group $G_2$ in characteristic $2$, which we also use to construct an example of an unstable $G_2$-bundle for which the canonical reduction is only defined after an inseparable base extension.

In order to state the precise result let us briefly recall Behrend's conjecture in the language of principal bundles (see Section \ref{groupschemes} for the general case).  Let $C/k$ be a smooth projective curve, $G$ a reductive group over $k$ and let $\cP$ be a $G$-torsor on $C$. Let $P\subset G$ be a parabolic subgroup and $\cP_P$ be a reduction of $\cP$ to $P$, i.e. $\cP_P$ is a $P$-torsor on $C$ together with an isomorphism $\cP_P\times^P G\cong \cP$. Such a reduction is called canonical if the following two conditions are satisfied:
\begin{itemize}
\item[(1)] $\cP/R_u(P)$ is a semistable $P/R_u(P)$-bundle.
\item[(2)] For any parabolic subgroup $Q\subset G$ with $P\subsetneq Q$ let $\chi:P\to \bG_m$ be the character given by $\det(\Lie(Q)/\Lie(P))$. Then $\deg(\cP(\chi))<0$.
\end{itemize}
Behrend proved that a unique canonical reduction exists over the perfect closure $k^{\text{perf}}$ of $k$.

\begin{conjecture}[Behrend]\label{Conj_Bundles} Let $\cP$ be a $G$-torsor on $C/k$. Let $P\subset G$ be a parabolic subgroup such that the canonical reduction of $\cP$ is a reduction to $P$ and denote by $\cP_P$ the corresponding $P$-bundle. Let $\cg,\cp$ be the Lie-algebras of $G$ and $P$.

Then $H^0(C, \cP_P\times^P \cg/\cp)=0$. 
\end{conjecture}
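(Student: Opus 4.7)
The plan is to filter the $P$-module $\cg/\cp$ by $P$-submodules so that the graded pieces are pulled back from the Levi quotient $L = P/R_u(P)$, and then to analyze these pieces by their $Z(L)^0$-weights. Concretely, the lower central series of the nilpotent Lie algebra $\cg/\cp$ provides a decreasing $P$-stable filtration whose successive quotients $W_i$ are annihilated by $R_u(P)$ and hence acquire the structure of $L$-modules, i.e.\ of representations of $L$ that pull back to $P$. Using the long exact sequences in cohomology associated to the filtration, it suffices to prove $H^0(C, \cP_L \times^L W_i) = 0$ for each $i$, where $\cP_L := \cP_P/R_u(P)$.

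Next, I would decompose each $W_i$ into isotypic components $W_{i,\chi}$ under the action of $Z(L)^0$. A root-theoretic computation shows that every character $\chi$ of $Z(L)^0$ occurring in $\cg/\cp$ is a non-negative rational combination of the characters $\chi_Q = \det(\Lie(Q)/\Lie(P))$ for the maximal parabolics $Q \supsetneq P$, and that $\chi$ is nonzero, since the trivial $Z(L)^0$-weight contribution is entirely inside $\cp$. Condition $(2)$ of the canonical reduction then gives $\deg(\cP_P(\chi_Q)) < 0$ for every such $Q$, and by non-negative linearity $\deg(\cP_P(\chi)) < 0$ for every $\chi$ appearing in $\cg/\cp$. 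Consequently each associated bundle $\cP_L \times^L W_{i,\chi}$ has strictly negative slope.

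The remaining, and decisive, step is to deduce $H^0(C, \cP_L \times^L W_{i,\chi}) = 0$ from negative slope, which is where semistability enters. By condition $(1)$, $\cP_L$ is a semistable $L$-bundle; in characteristic zero the Ramanan--Ramanathan theorem then guarantees that the associated vector bundle $\cP_L \times^L V$ is semistable for every representation $V$, and a semistable bundle of strictly negative slope has no global sections. The main obstacle, and the real content of the paper, is that in positive characteristic this last step can genuinely fail: semistability of a principal $L$-bundle need not survive under arbitrary associated constructions, the failure being controlled by the height of the weights of $V$ together with Frobenius-type pathologies. One therefore has to go through the list of classical and exceptional $G$ and, for each Levi $L$ and each irreducible summand of $\cg/\cp$, check that the weights are low enough that semistability descends in the required characteristic. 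This representation-theoretic case analysis is what produces the bounds advertised in the abstract, and its breakdown for the short roots of $G_2$ in characteristic $2$ is what makes room for the counterexample the paper constructs.
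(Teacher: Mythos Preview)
Your outline is essentially the paper's own strategy, and you correctly recognize that the statement is a conjecture which the paper shows to be \emph{false} in general (the $G_2$ counterexample in characteristic $2$), so what you are really sketching is the proof of the partial results in Theorem~\ref{ergebnis}. For the exceptional groups your approach coincides with Section~\ref{exceptional}: filter $\cg/\cp$ by the $\cu_i^-$ (your lower-central-series description is equivalent), observe that the graded pieces are $L$-modules of negative degree by the numerical conditions defining the canonical reduction, and then invoke the low-height theorem of Ilangovan--Mehta--Parameswaran to transport semistability of $\cP_L$ to the associated vector bundles. Your additional refinement of decomposing each $W_i$ into $Z(L)^0$-isotypic pieces is a useful sharpening, since for non-maximal parabolics the bundle $\cP_L\times^L (\cu_i^-/\cu_{i+1}^-)$ can be a direct sum of pieces of different slopes and hence not literally semistable; the paper sidesteps this by reducing to maximal parabolics before computing heights.

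The one substantive divergence is for the classical groups. You propose to run the same low-height argument uniformly, but the paper does \emph{not} do this for types $A_n,B_n,C_n,D_n$: as noted at the end of Section~\ref{exceptional}, the height bounds one obtains that way are far from optimal. Instead, Section~\ref{classical} uses Ramanan's observation that the maximal destabilizing subbundle of a vector bundle with a nondegenerate (anti)symmetric form is automatically isotropic, so the canonical reduction of the principal bundle is literally the Harder--Narasimhan flag of the underlying vector bundle. The subquotients of $\cP_P\times^P \cg/\cp$ are then identified concretely as $\cHom(\cE_j/\cE_{j-1},\cE_i/\cE_{i-1})$ with $i<j$, or as (anti)symmetric pieces thereof, and the vanishing of $H^0$ follows directly from the slope inequalities of the Harder--Narasimhan filtration---no appeal to preservation of semistability under associated constructions is needed. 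This is why the classical cases hold in all characteristics (except the orthogonal case in characteristic $2$, where the bilinear-form description breaks down), whereas a uniform low-height argument would impose unnecessary characteristic restrictions.
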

As we will recall below, Behrend's conjecture implies that the canonical reduction exists over any field and even in families if all bundles of the family have the same type of instability. However, for the proof of the conjecture we may assume that the base field $k$ is algebraically closed, since the conjecture only claims that some vector bundle on $C$ doesn't have global sections.

We want to show that Behrend's conjecture holds in the following cases:
\begin{theorem}\label{ergebnis}
\begin{enumerate}
\item The conjecture holds if $G$ is of type $A_n,C_n$ and if the characteristic of $k$ is $\neq 2$ then it holds for $B_n,D_n$.
\item The conjecture holds if $G$ is of type $G_2$ and the characteristic of $k$ is $\neq 2$. 
\item If the characteristic of $k$ is $2$ and the curve $C$ has genus $g>1$ then the conjecture does not hold for $G=G_2$.
\item If the characteristic of $k$ is $>7$ then the conjecture holds for groups of type $F_4$ or $E_6$.
\item If the characteristic of $k$ is $>13$ then the conjecture holds for groups of type $E_7$.
\item If the characteristic of $k$ is $>31$ then the conjecture holds for groups of type $E_8$.
\end{enumerate}
\end{theorem}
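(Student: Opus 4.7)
The positive cases (1), (2), (4), (5), (6) all follow a common strategy: decompose the bundle $\cP_P \times^P (\cg/\cp)$ under the action of the Levi quotient $L = P/R_u(P)$ into pieces indexed by the roots $\alpha$ of $\cg$ outside $\cp$, and prove that each piece is semistable of strictly negative degree. Semistability of each piece follows from semistability of the Levi reduction $\cP/R_u(P)$ given by condition~(1), combined with a theorem (Ramanan--Ramanathan, or the refined version of Ilangovan--Mehta--Parameswaran in small characteristic) guaranteeing that associated bundles via an $L$-representation remain semistable. Negative degree follows by writing $\alpha$ as a non-negative combination of the simple roots $\beta$ of $\cg$ that are \emph{not} roots of $\cp$, and then invoking condition~(2) to conclude that each $\deg(\cP(\chi_\beta))$ is negative. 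Since semistable bundles of negative degree have no global sections, the desired vanishing $H^0(C,\cP_P\times^P \cg/\cp) = 0$ follows.

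For the classical groups the cleanest implementation proceeds via the standard translation of $G$-torsors for $G = \GL_n, \Sp_{2n}, \textsf{SO}_n$ into vector bundles with a (symplectic or symmetric) bilinear form. The canonical reduction is then an isotropic refinement of the Harder--Narasimhan filtration of the underlying vector bundle, and $\cg/\cp$ splits into $\cHom$-summands between HN quotients, together with symmetric or alternating squares in the $B_n,D_n$ case. Each summand is semistable of negative slope, and the characteristic hypothesis $\neq 2$ intervenes only to identify the quadratic form on the orthogonal side with a symmetric bilinear form. For the exceptional groups the same scheme applies, but two quantitative inputs become delicate: the maximal coefficient appearing when expanding a root $\alpha$ in the basis of simple roots (which enters the slope bound), and the primes for which a small irreducible $L$-module can fail to propagate semistability to the associated bundle. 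The bounds $>7$, $>13$, $>31$ are exactly those at which both issues disappear for $F_4/E_6$, $E_7$, $E_8$ respectively.

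The hardest part of the theorem is the counterexample in (3). My plan is to exploit the exceptional isogeny of $G_2$ in characteristic $2$, under which the $7$-dimensional short-root representation is, up to Frobenius, interchanged with a representation built from long roots. I would construct an explicit reduction $\cP_P$ of a $G_2$-bundle to a maximal parabolic $P$ such that (i) conditions (1) and (2) hold for an appropriately chosen line bundle on $C$ -- here the assumption $g>1$ gives enough degrees of freedom in $\Pic(C)$ to tune the characters $\chi_\beta$ -- while (ii) a nonzero global section of $\cP_P\times^P \cg/\cp$ is produced by a Frobenius-linear map that exists only in characteristic $2$. The main obstacle I foresee is verifying that the reduction is genuinely canonical, i.e.\ simultaneously checking condition~(2) against both maximal parabolics of $G_2$; this step is a finite numerical calculation with the two fundamental characters.
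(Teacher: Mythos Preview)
Your overall architecture for parts (1), (4), (5), (6) matches the paper: reduce to the Levi, filter $\cg/\cp$ by the $P$-normal series, observe that the subquotients are $L$-modules of negative degree, and then invoke Ilangovan--Mehta--Parameswaran to get semistability (hence no sections) once $\Char(k)$ exceeds the height. The paper carries this out by an explicit height computation for each maximal parabolic of $F_4,E_6,E_7,E_8$ using the Bourbaki tables; your proposal is correct in spirit here, though you should be aware that the pieces of the filtration are not indexed by single roots $\alpha$ but by the level in the grading determined by the simple roots outside $L$.

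For part (2) there is a gap. The height argument you outline only excludes $\Char(k)\le 3$ for $G_2$ (the paper itself notes this). To cover characteristic~$3$ the paper uses Chevalley's explicit realisation $G_2\subset\GL_7$ with the two $\SL_2$'s acting on $W^\vee\oplus W^{(2)}\oplus W$, computes the $L_i$-module structure of $\Lie(G_2)/\Lie(P_i)$ by hand, and observes that only the symmetric square of a rank-$2$ bundle (height~$2$) occurs. Your proposal contains no mechanism to go below the height bound, so as written you would only prove $\Char(k)>3$.

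For part (3) the plan has a concrete error and a missing idea. The exceptional isogeny of $G_2$ that swaps long and short roots exists in characteristic~$3$, not~$2$; it is not the source of the counterexample. What actually happens in characteristic~$2$ is that in the Chevalley model the middle block $W^{(2)}$ degenerates to the Frobenius twist $W^{[2]}$, so the $L_1\cong\GL_2$-module appearing in $\Lie(G_2)/\Lie(P_1)$ becomes $\det(W)\oplus\big((W^{[2]})^\vee\otimes\det(W)\otimes W\big)$. The counterexample therefore requires a \emph{stable} rank-$2$ bundle $\cE$ of negative degree such that $H^0(C,(F^*\cE)^\vee\otimes\det(\cE)\otimes\cE)\neq 0$, i.e.\ a bundle destabilised by Frobenius pullback in a specific way. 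The paper produces one explicitly: $\cE=F_*(\Omega_C^{-1})$, with the section coming from the adjunction maps $F^*F_*(\Omega_C^{-1})\to\Omega_C^{-1}$ and $B^{-1}\hookrightarrow\cE$. The hypothesis $g>1$ is used to guarantee that $F_*(\Omega_C^{-1})$ is stable and of negative degree, not to ``tune characters in $\Pic(C)$'' as you suggest; twisting by line bundles will not manufacture the required Frobenius-destabilised phenomenon. Finally, verifying canonicity is not the obstacle you anticipate: since $P_1$ is maximal there is only one numerical invariant, and it is automatically negative once $\deg(\cE)<0$.
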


\begin{corollary}
The semistable reduction theorem of \cite{semistable} holds if $G$ and $\Char(k)$ satisfy one of the conditions 1,2,4,5,6 of Theorem \ref{ergebnis}.

In particular these conditions imply that the moduli-space of semistable $G$-bundles is projective.
\end{corollary}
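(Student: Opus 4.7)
The plan is to recognise that this corollary is a formal consequence of Theorem \ref{ergebnis} combined with results already in the literature; no fresh geometric input is required. First, Theorem \ref{ergebnis} establishes Behrend's Conjecture \ref{Conj_Bundles} in each of the cases (1), (2), (4), (5), (6). Case (3) is excluded because the theorem provides an explicit counterexample there, so the remaining five cases are precisely those in which the conjecture is now known.

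Next, I would invoke the main theorem of \cite{semistable}: Behrend's conjecture implies the semistable reduction theorem for $G$-torsors on families of curves. The mechanism is that the canonical reduction of the generic fibre over a discrete valuation ring spreads out to the special fibre once the conjecture guarantees that it is defined over the residue field (rather than only over its perfect closure). Combining this implication with Theorem \ref{ergebnis} immediately yields the first assertion of the corollary.

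For the projectivity statement, I would appeal to the construction of Gomez, Langer, Schmitt and Sols \cite{GLSS}, which produces a quasi-projective coarse moduli space of semistable $G$-bundles. In the presence of the semistable reduction theorem just established, the valuative criterion of properness upgrades this quasi-projective scheme to a projective one; this is the form in which the deduction is already carried out in \cite{semistable}.

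There is no genuine obstacle: all the technical content has been absorbed into Theorem \ref{ergebnis}, and the corollary merely records the formal consequence along the implication chain ``Behrend's conjecture $\Rightarrow$ semistable reduction $\Rightarrow$ projectivity'' spelled out in the introduction. The only minor point to verify is that the formulation of Behrend's conjecture used in \cite{semistable} agrees with Conjecture \ref{Conj_Bundles}, which is immediate from the review of the conjecture given above.
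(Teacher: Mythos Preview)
Your proposal is correct and matches the paper's approach exactly: the paper states this corollary without proof, treating it as an immediate consequence of Theorem \ref{ergebnis} together with the implication ``Behrend's conjecture $\Rightarrow$ semistable reduction $\Rightarrow$ projectivity'' already established in \cite{semistable} and \cite{GLSS}, precisely as you outline.
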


The last part of the theorem, giving the bounds for $F_4$ and $E_n$ is a simple improvement on the argument given by Biswas and Holla \cite{BiswasHolla}, using low height representations. Our main point is to analyze why the corresponding bounds are not optimal and to use this analysis to construct a counterexample to the conjecture. To prove the theorem, we need explicit descriptions of the Chevalley groups, because in small characteristics the representations occurring in $\cg/\cp$ are no longer determined by their weights. For classical groups this is standard. For $G_2$ the explicit description which helps to prove the conjecture if $\Char(k)>2$ also allows us to construct a counterexample in characteristic $2$. (Note that for $G_2$ in characteristic $>3$ one could alternatively use Subramanian's result \cite{Subramanian} to argue as in the case of orthogonal groups.)

The organisation of the article is as follows. In Section \ref{groupschemes} we recall Behrend's formulation of the conjecture in the language of group schemes over a curve and explain why the seemingly more restrictive formulation given above is equivalent to the original formulation. In Section \ref{classical} we give the argument for classical groups, which is probably well known, since it essentially follows from an argument of Ramanan. However, I could not find a reference for Behrend's conjecture in this case. In Section \ref{exceptional} we give the argument for the exceptional groups and finally in Section \ref{G2} we study the case of $G_2$.

After this article was published, C. Pauly has applied the strategy we use for $G_2$ to study orthogonal groups in characteristic 2 and managed to construct counterexamples to the conjecture for these groups. 

\noindent {\bf Acknowledgments.} These notes probably would not exist without the encouragement of Alexander Schmitt, whom I also would like thank for the careful reading of a previous version of the manuscript.

\section{Reduction to the case of constant group schemes}\label{groupschemes}

In this section we want to reduce Behrend's general conjecture (\cite{BehrendSemistable}, Conjecture 7.6) to the case of torsors under constant group schemes. In particular we have to justify our reformulation of Behrend's conjecture. In order to do this, we have to recall the definition of the numerical invariants of a parabolic subgroup.

Fix a reductive group scheme $G$ on $C$.  Let $P\subset G$ be a parabolic subgroup scheme (i.e. $G/P$ is projective over $C$). Denote by $R_u(P)$ the unipotent radical of $P$. Then by (\cite{SGA3}, Expos\'e XXVI, Proposition 2.1) $R_u(P)$ has a canonical filtration:
$$ R_u(P)=U_0\supset U_1 \supset U_2 \supset \dots \supset (1)$$
by normal subgroups, defined as follows: First assume that $G$ is a split group scheme, and choose $T\subset P$ a maximal torus and $T\subset B \subset P$ a Borel subgroup, corresponding to a set of simple roots $\alpha_1,\dotsm,\alpha_r$.
For any root $\alpha$ we denote by $U_\alpha$ the corresponding root subgroup. The group $P$ defines a subset $t(P)=\{\alpha_1,\dots,\alpha_s\}$ of those simple roots, such that $-\alpha_i$ is not a root of $P$. This subset is called the type of $P$. Now $U_i$ is the subgroup of $R_u(P)$ generated by those root subgroups $U_\alpha\subset R_u(P)$ such that $\alpha =\sum_{j=1}^s n_j \alpha_j + \sum_{k=s+1}^r m_k \alpha_k$ with $\sum_{j=1}^s n_j> i$.
This is independent of the choice of $T$ and therefore for general $G$ we can define $U_i$ by descent for a flat covering $U\to C$ over which $G$ splits (\cite{SGA3} Expos\'e XXVI, Proposition 2.1).

One can refine this filtration. In case  $G$ is a split group we use the notation as above and denote by $\Phi^+$ the set of positive roots. For any simple root $\alpha_i\in t(P)$ denote by $\Phi_{\alpha_i}:=\{ \alpha\in \Phi^+ | \alpha = \alpha_i+ \sum_{j=s+1}^r n_j \alpha_j\}$. Then $U_0/U_{1}\cong \oplus_{i=1}^s (\oplus_{\alpha\in \Phi_{\alpha_i}} U_\alpha)$ and the action of $P$ on $U$ respects this direct sum decomposition. Since the commutator of elements in $U_0$ lies in $U_1$ the quotient $U_0/U_1$ is abelian and therefore the direct sum decomposes $U_0/U_1$ into a direct sum of vector bundles on $C$.
For general $G$ the set $t(P)$ defines a subscheme of the Dynkin scheme of $G$, which has several connected components $o_k \subset t(P)$ and for each of those the vector bundles $W(P,o_k):= \oplus_{\alpha_i\in o_k} (\oplus_{\alpha\in \Phi_{\alpha_i}} U_\alpha)$ are still defined over $C$.

Behrend defines the numerical invariants of $P$ as 
$$n(P,o_k):= \deg(W(P,o_k)).$$
\begin{definition}[Behrend]
The group scheme $G$ is called {\em semistable} if for all parabolic subgroups $P\subset G$ and all connected components $o_k\subset t(P)$ we have $n(P,o_k)\leq 0$.

A parabolic subgroup $P\subset G$ is called {\em canonical}, if the reductive part $P/R_u(P)$ is semi-stable and all the numerical invariants satisfy $n(P,o_k)>0$. 
\end{definition}

\begin{conjecture}[Behrend \cite{BehrendSemistable}, 7.6]\label{Conj_Groups} Assume that $G$ is a reductive group scheme over $C$ and that $P\subset G$ is the canonical parabolic subgroup of $G$. Then 
$$ H^0(C,\Lie(G)/\Lie(P))=0.$$
\end{conjecture}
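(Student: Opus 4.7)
The natural strategy is to establish the equivalence between Conjecture~\ref{Conj_Groups} and Conjecture~\ref{Conj_Bundles}, since the latter is the formulation that will be proven (in the cases of Theorem~\ref{ergebnis}) in the subsequent sections. The plan proceeds in two reduction steps: first from arbitrary reductive group schemes to inner forms of a constant group by descent along an \'etale cover, and then from inner forms to $G_0$-torsors via an explicit twist construction.

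For the second (twist) step, I would fix a split reductive group $G_0$, a $G_0$-torsor $\cP$ on $C$, and a reduction $\cP_P$ of $\cP$ to a parabolic $P_0 \subset G_0$, and then form the inner twist $G := \cP \times^{G_0} G_0$ (with $G_0$ acting on itself by conjugation) together with the parabolic subgroup scheme $\mathcal{P} := \cP_P \times^{P_0} P_0 \subset G$. A direct computation gives $\Lie(G)/\Lie(\mathcal{P}) \cong \cP_P \times^{P_0} (\cg_0/\cp_0)$, so the cohomological statements of the two conjectures coincide. It then remains to match the canonical conditions: since $R_u(\mathcal{P}) = \cP_P \times^{P_0} R_u(P_0)$ and $W(\mathcal{P}, o_k) = \cP_P \times^{P_0} W(P_0, o_k)$, semistability of $\mathcal{P}/R_u(\mathcal{P})$ is equivalent to semistability of the associated $P_0/R_u(P_0)$-bundle, and the numerical invariants $n(\mathcal{P}, o_k)$ differ by a sign from the degrees $\deg(\cP(\chi))$ of the characters $\chi$ attached to the parabolics $Q \supsetneq P_0$ that enlarge $P_0$ along the component $o_k$.

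For the first (descent) step, given a general reductive group scheme $G$ on $C$ with canonical parabolic $P$, I would pick a finite \'etale Galois cover $\pi\colon C'\to C$ that trivializes the outer-form component of $G$, so that $\pi^*G$ becomes an inner form of a constant group; such a cover exists because $G$ is locally split in the \'etale topology and $\mathrm{Out}(G_0)$ is finite. Since $\pi$ is finite flat, pullback of sections is injective, giving an inclusion $H^0(C,\Lie(G)/\Lie(P))\hookrightarrow H^0(C',\pi^*(\Lie(G)/\Lie(P)))$, so the desired vanishing descends from $C'$ to $C$. The pulled-back parabolic $\pi^*P$ has semistable reductive quotient (semistability being preserved under \'etale base change) and its numerical invariants are multiplied by $\deg(\pi)$ and so remain strictly positive, hence $\pi^*P$ is still canonical on $\pi^*G$; combining with the twist step reduces the statement to Conjecture~\ref{Conj_Bundles}.

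The main technical obstacle will be the bookkeeping identifying the canonical conditions on the two sides of the correspondence: one must verify that the connected components $o_k \subset t(P)$ correspond bijectively to the maximal parabolics $Q \supsetneq P$ via the characters $\chi_{o_k} = \det(\Lie(Q)/\Lie(P))$, and that each $n(P,o_k)$ equals a positive multiple of $-\deg(\cP(\chi_{o_k}))$, so that the positivity condition in the group-scheme formulation becomes precisely the negativity condition in the bundle formulation.
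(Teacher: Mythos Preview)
Your plan matches the paper's treatment essentially line for line. The paper does not prove the conjecture itself (indeed it later disproves it for $G_2$ in characteristic $2$); rather, immediately after stating it, the paper establishes the equivalence with Conjecture~\ref{Conj_Bundles} via two short lemmas: one reducing arbitrary reductive group schemes to inner forms by pulling back along a finite separable cover (citing Behrend's Corollary~7.4 for preservation of canonicity and Behrend--Dhillon for the existence of the cover), and one identifying inner forms with $G_0$-torsors via the twist $G=\cP\times^{G_0}G_0$ and matching the two notions of canonical reduction. Your two steps are exactly these two lemmas, in the same logical order, with the same cohomology injection $H^0(C,\cdot)\hookrightarrow H^0(C',\cdot)$ and the same sign comparison $n(P,o)=-c\cdot\deg(\cP\times^{P_0}\Lie(Q_o)/\Lie(P_0))$.

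The only substantive difference is that where the paper simply cites Behrend for ``$P$ canonical $\Leftrightarrow$ $\pi^*P$ canonical'', you sketch a direct argument. Be slightly careful there: the nontrivial direction is that semistability of the Levi quotient survives \'etale pullback, since after base change the group scheme may acquire new parabolic subgroups not visible over $C$; this is precisely what Behrend's result handles, and your parenthetical ``semistability being preserved under \'etale base change'' is doing real work that deserves either a citation or an argument.
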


\begin{lemma}
If Conjecture \ref{Conj_Groups} holds for reductive group schemes $G$ over $C$ which are inner forms of some fixed type, then the conjecture holds for all reductive group schemes of the same type.
\end{lemma}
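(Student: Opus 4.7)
The plan is to pass to a finite étale cover of $C$ on which the outer form trivialises, apply the hypothesis there, and descend the vanishing of $H^0$ to $C$.

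First I would recall that isomorphism classes of reductive group schemes on $C$ of a fixed type are classified by $H^1_{et}(C,\underline{\Aut}(G_0))$, where $G_0$ denotes a split form, and that there is a short exact sequence
\[ 1 \to G_0^{ad} \to \underline{\Aut}(G_0) \to \underline{\mathrm{Out}}(G_0)\to 1, \]
in which $\underline{\mathrm{Out}}(G_0)$ is a constant finite étale group scheme, namely the symmetries of the Dynkin diagram. The group scheme $G$ is an inner form precisely when the image of its class in $H^1_{et}(C,\underline{\mathrm{Out}}(G_0))$ is trivial. Let $\pi\colon\tilde C\to C$ denote the $\underline{\mathrm{Out}}(G_0)$-torsor associated to $G$; this is a finite étale Galois cover, and by construction $\pi^\ast G$ is an inner form over $\tilde C$.

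Next I would verify that $\pi^\ast P$ is the canonical parabolic of $\pi^\ast G$. Since formation of the bundles $W(P,o_k)$ commutes with base change, $\pi^\ast W(P,o_k)=W(\pi^\ast P,\pi^\ast o_k)$, with degrees multiplied by $\deg\pi>0$. The subscheme $\pi^\ast t(P)$ of the Dynkin scheme of $\pi^\ast G$ may split into more connected components than $t(P)$, but the Galois group of $\pi$ acts transitively on the pieces lying over each $o_k$ and preserves degrees; consequently each new component $o'\subset\pi^\ast o_k$ satisfies $n(\pi^\ast P,o')=\tfrac{\deg\pi}{r}\,n(P,o_k)>0$ for a suitable positive integer $r$. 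The same Galois-equivariance shows that $\pi^\ast(P/R_u(P))$ remains semistable: a destabilising parabolic on $\tilde C$ would have a unique canonical reduction, which by uniqueness would be Galois-invariant and hence descend to a destabilising parabolic on $C$. By uniqueness of the canonical parabolic on $\tilde C$ we conclude that $\pi^\ast P$ is canonical there.

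Finally, applying the hypothesis to the inner form $\pi^\ast G$ on $\tilde C$ yields
\[ H^0\bigl(\tilde C,\Lie(\pi^\ast G)/\Lie(\pi^\ast P)\bigr)=H^0\bigl(\tilde C,\pi^\ast(\Lie(G)/\Lie(P))\bigr)=0. \]
Since $\pi$ is faithfully flat, the unit $\cF\hookrightarrow\pi_\ast\pi^\ast\cF$ is injective for any quasi-coherent sheaf, so pullback is injective on global sections, and $H^0(C,\Lie(G)/\Lie(P))=0$ follows. The main obstacle will be the middle step: carefully controlling the possible fragmentation of the connected components of $t(P)$ under the pullback and checking that semistability of $P/R_u(P)$ is preserved; both points rest on the Galois-equivariance of the objects involved.
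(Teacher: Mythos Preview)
Your proposal is correct and follows the same three-step strategy as the paper: pass to a finite separable cover on which $G$ becomes an inner form, check that the canonical parabolic pulls back to the canonical parabolic, and use injectivity of $H^0$ along the cover. The paper's proof is shorter only because it outsources the two nontrivial steps to the literature---\cite{BehrendDhillon}, Lemma 3.1 for the existence of the cover and \cite{BehrendSemistable}, Corollary 7.4 for the preservation of canonicity---whereas you have essentially sketched direct proofs of both.
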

\begin{proof}
From \cite{BehrendSemistable}, Corollary 7.4, we know that a subgroup $P\subset G$ is canonical if and only if for some finite separable cover $p:C^\prime \to C$ the subgroup $p^*P\subset p^*G$ is canonical. Furthermore, there exists $p:C^\prime \to C$ such that $p^*G$ is an inner form (\cite{BehrendDhillon}, Lemma 3.1) and $H^0(C,\Lie(G)/\Lie(P))\incl{}H^0(C^\prime,p^*(\Lie(G)/\Lie(P)))=H^0(C^\prime,\Lie(p^*G)/\Lie(p^*P))$.
\end{proof}

\begin{lemma}
Let $G_0$ be a split reductive group scheme over $k$. Then Conjecture \ref{Conj_Groups} for group schemes $G$ over $C$ which are inner forms of $G_0$ is equivalent to Conjecture \ref{Conj_Bundles} for principal $G_0$-bundles.
\end{lemma}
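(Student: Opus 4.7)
The plan is to set up an explicit dictionary between pairs $(G,P)$ on the left-hand side (with $G$ an inner form of $G_0$ over $C$ and $P\subset G$ a parabolic subgroup scheme) and pairs $(\cP,\cP_{P_0})$ on the right-hand side (with $\cP$ a $G_0$-torsor on $C$ and $\cP_{P_0}$ a reduction to a parabolic $P_0\subset G_0$), and then check that the canonicity conditions and the cohomology groups appearing in the two conjectures match under this dictionary.

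First I would recall that inner forms of $G_0$ over $C$ are classified by $H^1_{\text{\'et}}(C,G_0^{\text{ad}})$, and that every such form can be written as $G=\cP\times^{G_0,\Ad}G_0$ for some $G_0$-torsor $\cP$; moreover $G$ depends only on the image of $\cP$ in $H^1(C,G_0^{\text{ad}})$. For a fixed type $t$, parabolic subgroup schemes $P\subset G$ of that type correspond to sections of $G/P\cong \cP\times^{G_0}(G_0/P_0)\to C$, hence to reductions $\cP_{P_0}$ of $\cP$ to a parabolic $P_0\subset G_0$ of type $t$. Under this bijection
\[
P=\cP_{P_0}\times^{P_0,\Ad}P_0,\qquad \Lie(G)/\Lie(P)=\cP_{P_0}\times^{P_0}(\cg_0/\cp_0),
\]
so the global section groups appearing in Conjectures \ref{Conj_Groups} and \ref{Conj_Bundles} are canonically identified. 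Thus the only real content of the lemma is that, under this dictionary, \emph{canonical parabolic subgroup} in Behrend's sense corresponds to \emph{canonical reduction} in the sense used above.

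Next I would unpack the definitions on both sides. Pulling the filtration $U_\bullet$ of $R_u(P)$ back along a trivialization of $\cP$ and comparing with its split model, one sees that the associated graded pieces satisfy
\[
W(P,o_k)=\cP_{P_0}\times^{P_0}\Bigl(\bigoplus_{\alpha_i\in o_k}\bigoplus_{\alpha\in\Phi_{\alpha_i}}\cu_\alpha\Bigr),
\]
so $n(P,o_k)=\deg W(P,o_k)$ equals $\deg\cP_{P_0}(\chi_{o_k})$, where $\chi_{o_k}$ is the product of the characters of $P_0$ on the listed root spaces. The parabolics $Q\subsetneq G$ with $Q\supsetneq P$ correspond to proper subsets of $t(P)$, and the extremal such $Q$ correspond to removing a single connected component $o_k$, in which case $\det(\Lie(Q)/\Lie(P))$ is precisely $\chi_{o_k}^{-1}$. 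Hence Behrend's positivity $n(P,o_k)>0$ is the same as condition (2), and semistability of $P/R_u(P)$ is exactly condition (1). Combined with the previous step, this establishes the equivalence of the two conjectures.

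The only real obstacle is the identification of Behrend's grouping by connected components of the Dynkin subscheme $t(P)$ with the characters used in the definition of canonical reduction given in the introduction; this is bookkeeping rather than a genuine difficulty, but it is the step where the convention that the inner form is determined only up to the adjoint quotient, and the fact that Galois orbits of simple roots in $t(P)$ match connected components of the Dynkin subscheme, must be used carefully. Everything else is a direct unwinding of the twist construction $\cP\times^{G_0}(-)$.
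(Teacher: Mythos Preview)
Your approach is essentially the same as the paper's: set up the dictionary $G=\cP\times^{G_0,\mathrm{conj}}G_0$, identify parabolic subgroup schemes with reductions via sections of $\cP/P_0$, match the Lie-algebra quotients, and then check that the two canonicity conditions agree. The paper likewise reduces to the minimal overgroups $Q_o\supsetneq P$ indexed by the components $o\subset t(P)$.

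There is one small inaccuracy worth flagging. You assert that for a minimal $Q=Q_{o_k}$ one has $\det(\Lie(Q)/\Lie(P))=\chi_{o_k}^{-1}$ on the nose. In general this is only true up to a positive rational multiple: $W(P,o_k)$ sits inside $U_0/U_1$ and therefore only sees roots whose total $t(P)$-coefficient is $1$, whereas $\Lie(Q_{o_k})/\Lie(P)$ contains root spaces with higher $o_k$-coefficient as well (e.g.\ for $G=\Sp_4$ with $t(P)=\{\alpha_1\}$ one finds $\det(\Lie(G)/\Lie(P))=\chi_{o}^{-2}$). The paper handles this by writing $n(P,o)=-c\cdot\deg(\cP_{P_0}\times^{P_0}\Lie(Q_o)/\Lie(P_0))$ for some $c>0$, noting that the relevant character is a positive multiple of the projection of $\sum_{\alpha_i\in o}\alpha_i$ onto $X^*(P)$. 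Since only the sign matters, this does not affect your conclusion, but your statement as written is not literally correct.

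A second, harmless over-complication: for an \emph{inner} form of a split group the Dynkin scheme is constant, so there is no Galois action mixing simple roots and each $o_k$ is a single simple root; your remark about matching Galois orbits with connected components is not needed here.
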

\begin{proof}
The condition that $G$ is an inner form means that there exists a $G_0$-torsor $\cP$ on $C$ such that $G=\Aut_{G_0}(\cP)=\cP\times^{G_0,\text{conj}} G_0$, where $G_0$ acts by conjugation on $G_0$. In particular given a type $t(P)$ of parabolic subgroups of $G$, let $P_0\subset G_0$ be a subgroup of the same type. Then parabolic subgroups of $G$ of type $t(P)$ are the same as reductions $\cP_{P_0}$ of $\cP$ to $P_0$, since both are parametrised by sections of $G/P=\cP\times^{G_0} G_0/P_0 = \cP/P_0$. Finally under this equivalence $\Lie(P)= \cP_{P_0} \times ^{P_0} \Lie(P_0)$ and $\Lie(G)/\Lie(P)\cong \cP_{P_0}\times^{P_0} \Lie(G_0)/\Lie(P_0)$. Thus we only have to check that the notions of canonical reduction and canonical subgroup coincide under the above equivalence.

The connected components $o\subset t(P)$ parametrise the parabolic subgroups $Q_o$ containing $P$ which are minimal with respect to this property. The numerical invariant $n(P,o)=\deg(\cP(\chi))$ is given by a character which is of the form $\chi= n (\sum_{\alpha_i\in o} \alpha_i) + \sum_{j>s} n_j \alpha_j$, i.e. $\chi$ is a multiple of the orthogonal projection of $(\sum_{\alpha_i\in o} \alpha_i)\in X^*(T)$ onto $X^*(P)$. Thus $n(P,o)=-c\cdot \deg(\cP\times_{P_0} (\Lie(Q_o)/\Lie(P_0))$ for some $c>0$. This shows that the two notions of canonical reduction coincide.
\end{proof}

Thus we have shown that the conjectures \ref{Conj_Bundles} and \ref{Conj_Groups} are equivalent. 

To end this section we briefly recall why Behrend's conjecture implies the rationality of the canonical reduction. Let $P\subset G$ be a parabolic subgroup.  Denote by $\Bun_G$ the moduli stack of $G$-bundles and by $\Bun_P$ the moduli stack of $P$-bundles. The tangent stack of $\Bun_G$ at a bundle $\cP$ is the quotient $[H^1(C, \cP\times^G \Lie(G))/H^0(C,\cP\times^G \Lie(G))]$ and similarly for a $P$-bundle $\cP_P$ the tangent stack is $[H^1(C, \cP_P\times^P \Lie(P))/H^0(C,\cP_P\times^P \Lie(P))]$.  Thus the long exact sequence in cohomology shows that the representable morphism $\Bun_P\to \Bun_G$ is an immersion at bundles $\cP_P$ which define the canonical reduction of $\cP_P\times^P G$, if and only if Behrend's conjecture is true. Since Behrend proved that the canonical reduction is unique (\cite{BehrendSemistable}, Proposition 8.2) the map is radiciel at such bundles, i.e., the geometric fibres contain only one point. This shows that the map is a locally closed immersion and therefore the canonical reduction must be rational.

\section{Classical groups}\label{classical}

In this section we want to show that Behrend's conjecture holds for groups of type $A_n$, $C_n$ and if $\Char(k)>2$ then the conjecture also holds for groups of type $B_n,D_n$. This is straightforward, but let us give the argument.

First of all the statement of the conjecture implies that it is sufficient to prove the conjecture for split adjoint groups i.e., $\PGL_n,\PSO_n,\PSp_n$. Since we consider bundles over curves all these bundles are induced from principal bundles for the groups $\GL_n,\GSO_n,\GSp_n$ (the obstruction lies in $H^2(C,\bG_m)=0$ if $C$ is a curve over an algebraically closed field).  Therefore we may assume that the group $G$ classifies vector bundles (in case $A_n$) respectively vector bundles  together with a non-degenerate bilinear form (symmetric in the cases $B_n,D_n$ and alternating in case $C_n$) with values in some line bundle. For groups of type $B_n, D_n$ in characteristic $2$ we will have to consider vector bundles together with a quadratic form. This will be done in the next section. The following proposition is a variant of a result of Ramanan (\cite{Ramanan}, Proposition 4.2):
\begin{proposition}[Ramanan]\label{Ramanan}
Let $(\cE,b:\cE \to \cE^\vee \tensor \cL)$ be a vector bundle together with a non-degenerate bilinear form $b$, either symmetric or alternating, corresponding to the principal $G$-bundle $\cP$.

Then the maximal destabilising subbundle $\cE_{\text{max}}\subset \cE$ is isotropic and the canonical reduction of $\cP$ is given by the stabiliser of the Harder-Narasimhan-flag $\cE^\bullet\subset \cE$.
\end{proposition}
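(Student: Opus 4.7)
The plan is to exploit the self-duality $b\colon \cE \stackrel{\sim}{\longrightarrow} \cE^\vee \otimes \cL$ of the underlying vector bundle together with the uniqueness of the Harder--Narasimhan filtration to force the HN flag to be $b$-isotropic up to the middle. First I would transfer the HN filtration $0 \subset \cE_1 \subset \cE_2 \subset \dots \subset \cE_k = \cE$ through $b$: dualising and twisting by $\cL$ the inclusions $\cE_i \subset \cE$ produces a chain $0 \subset (\cE/\cE_{k-1})^\vee \otimes \cL \subset \dots \subset \cE^\vee \otimes \cL$, whose successive quotients are the duals of the HN pieces of $\cE$, twisted by $\cL$, in reversed order. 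Since duals of semistable bundles on a curve are semistable and slopes transform as $\mu(F^\vee \otimes \cL) = \deg(\cL) - \mu(F)$, this is precisely the HN filtration of $\cE^\vee \otimes \cL$. Uniqueness of HN applied to the isomorphism $b$ then forces $\cE_i = \cE_{k-i}^\perp$ for every $i$.

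For $i = 1$ this already gives $\cE_{\max} = \cE_1 = \cE_{k-1}^\perp$, so as soon as $\cE$ is unstable ($k \geq 2$) the inclusion $\cE_1 \subset \cE_{k-1}$ yields $\cE_1 \subset \cE_1^\perp$, which is the desired isotropy of $\cE_{\max}$. The same identity shows that all $\cE_i$ with $2i \leq k$ are isotropic and that the full HN flag, augmented by its perpendiculars, forms an isotropic (in types $B,D$) respectively symplectic (in type $C$) flag; its stabiliser $P \subset G$ is thus a parabolic subgroup, since stabilisers of isotropic flags in $\GSp$ and $\GSO$ are exactly the parabolics.

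It remains to check the two defining conditions for $P$ to be Behrend's canonical reduction. The Levi of $P$ splits as a product of $\GL$-factors (one per gap in the flag before the middle) acting on the HN graded pieces $\cE_i/\cE_{i-1}$, together with a group $G'$ of the same classical type acting on the (possibly trivial) self-dual middle piece $\cE_i^\perp/\cE_i$ with its induced non-degenerate form; semistability of the Levi quotient then follows because the HN graded pieces are semistable by construction, and any destabilising sub-object of the middle piece would lift to $\cE$ and contradict the HN property. The strict inequalities $n(P,o)>0$ translate, via the character computation already recorded in the proof of equivalence of Behrend's two formulations, into the strict slope decrease $\mu(\cE_i/\cE_{i-1}) > \mu(\cE_{i+1}/\cE_i)$. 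The main obstacle is the combinatorial bookkeeping of this last step: matching the characters attached to the connected components of $t(P)$ for $\GSp$ and $\GSO$ with the correct slope differences of consecutive HN graded pieces, after which positivity is immediate from the defining slope inequalities of the HN filtration.
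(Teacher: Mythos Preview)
Your argument is correct and reaches the same conclusion as the paper, but the route to isotropy is genuinely different. The paper proves directly that the composite $\cE_{\max}\to\cE\xrightarrow{b}\cE^\vee\otimes\cL\twoheadrightarrow\cE_{\max}^\vee\otimes\cL$ vanishes by comparing the slopes of the two semistable bundles at the ends, and then inducts by passing to $\cE_{\max}^\perp/\cE_{\max}$. You instead transport the entire HN filtration through the isomorphism $b$ and invoke uniqueness to obtain the global symmetry $\cE_i^\perp=\cE_{k-i}$ in one stroke; isotropy of the first half of the flag then drops out. Your method yields the full self-dual structure of the flag immediately and makes clear that no augmentation by perpendiculars is needed (the HN flag already \emph{is} symmetric under $\perp$), while the paper's inductive argument has the advantage of being entirely elementary and of simultaneously producing the explicit list of subquotients $\cHom(\cE_i/\cE_{i-1},\cE_j/\cE_{j-1})$, $\Sym^2(\cE_i/\cE_{i-1})\otimes\cL^\vee$, $\wedge^2(\cE_i/\cE_{i-1})\otimes\cL^\vee$ of $\Lie(P)$, which are precisely what is needed in the corollary that follows. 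One small remark: your justification for semistability of the middle factor (``a destabilising sub-object would lift to $\cE$'') is unnecessarily indirect; the middle piece is a single HN graded piece, hence a semistable vector bundle, and for $\GSp$/$\GSO$ this is already equivalent to semistability of the associated principal bundle, as the paper notes.
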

\begin{proof}
The parabolic subgroups of $\GSp_n$ (resp. $\GSO_m$) are given as stabilisers of flags of isotropic subspaces. 
Therefore a reduction to a parabolic subgroup of a $\GSp_n$-bundle is given by a flag of isotropic subbundles of the corresponding vector bundle with alternating bilinear form.

First we show that the subbundle of maximal slope $\cE_{\text{max}}\subset \cE$ is isotropic. The bilinear form defines a morphism:
$$\cE_{\text{max}} \to \cE \to \cE^{\vee} \tensor \cL \tto \cE_{\text{max}}^\vee \tensor \cL$$
But $\mu(\cE_{\text{max}}^\vee\tensor \cL)<\mu(\cE^\vee\tensor \cL)=\mu(\cE)<\mu(\cE_{\text{max}})$ and both bundles are semi-stable, so this morphism must be zero. Thus $\cE_{\text{max}}$ is isotropic. Inductively, replacing $\cE$ by $\cE_{\text{max}}^{\perp}/\cE_{\text{max}}$ we see that the Harder-Narasimhan-flag $\cE^\bullet$ defines a flag of isotropic subbundles of $\cE$.

Now from the standard embedding $\GSp_n\subset \GL_n$ (resp. $\GSO_n\subset \GL_n$) one sees that the $\Lie(P)$-bundle corresponding to a flag of isotropic subbundles $\cE_1\subset \dots \subset \cE_r\subset \cE$ has a filtration by bundles such that the subquotients are of the form $\cHom(\cE_i/{\cE_{i-1}},\cE_{j}/\cE_{j-1})$ with $i>j$ and $\Sym^2(\cE_i/\cE_{i-1})\tensor \cL^\vee$ (resp.  $\wedge^2\cE_i/\cE_{i-1}\tensor \cL^\vee$ in the case $\GSO_n$). The last two occur, since $b$ defines isomorphisms $\cE_i/\cE_{i-1} \to (\cE_{r+1-i}/\cE_{r-i})^\vee \tensor \cL$ and thus the bundles arise as the subbundles of symmetric (resp. antisymmetric) morphisms in $\cHom((\cE_i/\cE_{i-1})^\vee,\cE_i/\cE_{i-1})\tensor \cL^\vee$. In particular the numerical invariants of the principal $P$-bundle are sums of degrees of these subquotients.

Thus the principal bundle $\cP$ is semistable if and only if the associated vector bundle $\cE$ is semistable and for any reduction to $P$ the corresponding $P/R_u(P)$-bundle is semistable if and only if the subquotients of the corresponding filtration of $\cE$ are semistable. Finally the canonical reduction of $\cE$ defines a reduction of $\cP$, such that the numerical invariants are positive, so this must be the canonical reduction of $\cP$.
\end{proof}

\begin{corollary}
Let $(\cE,b)$ be a vector bundle together with a non-degenerate bilinear form, either symmetric or alternating.  Let $\cP_P$ be the canonical reduction of the corresponding $G$-bundle. Then $H^0(C,\cP_P\times^P \Lie(G)/\Lie(P))=0$.
\end{corollary}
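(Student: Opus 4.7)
The plan is to equip $\cP_P\times^P(\Lie(G)/\Lie(P))$ with a filtration whose successive quotients are vector bundles with vanishing $H^0$. By Proposition \ref{Ramanan} the canonical reduction is the stabiliser of an isotropic flag $\cE_1\subset\cdots\subset\cE_r\subset\cE$ which, together with its orthogonal extension, is the full Harder--Narasimhan filtration
$$0\subset\cE_1\subset\cdots\subset\cE_r\subset\cE_r^\perp\subset\cdots\subset\cE_1^\perp\subset\cE.$$
Writing $V_i=\cE_i/\cE_{i-1}$, $V_0=\cE_r^\perp/\cE_r$ and $V_{-i}=\cE_{i-1}^\perp/\cE_i^\perp$, the $V_a$ are semistable with strictly decreasing slopes, the form identifies $V_{-i}\cong V_i^\vee\tensor\cL$, and non-degeneracy of $b$ forces $\mu(\cE)=\deg(\cL)/2$, so $\mu(V_i)>\deg(\cL)/2$ for $1\le i\le r$.

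Mirroring the description of $\Lie(P)$ in the proof of Proposition \ref{Ramanan}, the bundle $\cP_P\times^P(\Lie(G)/\Lie(P))$ carries a filtration whose associated graded splits into two kinds of pieces: (a) $\cHom(V_a,V_b)=V_a^\vee\tensor V_b$ for each ordered pair with $V_a$ preceding $V_b$ in the filtration, so that $\mu(V_a)>\mu(V_b)$, coming from the off-diagonal blocks; and (b) for each $i=1,\dots,r$ a single self-paired block of the form $(\Sym^2 V_i)^\vee\tensor\cL$ in the symplectic case or $(\wedge^2 V_i)^\vee\tensor\cL$ in the orthogonal case, arising from the component $\cHom(V_i,V_{-i})=V_i^\vee\tensor V_i^\vee\tensor\cL$ after imposing the symmetry dictated by $b$.

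It then suffices to show each subquotient has vanishing $H^0$. In case (a), $H^0(C,\cHom(V_a,V_b))=\Hom(V_a,V_b)=0$ because $V_a,V_b$ are semistable with $\mu(V_a)>\mu(V_b)$. In case (b), a global section is a symmetric (resp.\ antisymmetric) morphism $V_i\to V_i^\vee\tensor\cL$; but $V_i^\vee\tensor\cL$ is semistable of slope $\deg(\cL)-\mu(V_i)<\mu(V_i)$, so $\Hom(V_i,V_i^\vee\tensor\cL)=0$ even without imposing any symmetry condition.

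The only real work is the bookkeeping in the middle paragraph: one has to match each block in the associated graded of $\Lie(G)/\Lie(P)$ with the correct bundle in (a) or (b), entirely parallel to the matrix description of $\Lie(P)$ employed in Proposition \ref{Ramanan}. Pleasantly, no delicate argument about semistability of tensor products of semistable bundles is required, because each $H^0$-vanishing is read off directly from semistability of source and target.
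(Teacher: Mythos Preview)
Your proof is correct and follows essentially the same approach as the paper's: filter $\cP_P\times^P(\Lie(G)/\Lie(P))$ into pieces of the form $\cHom(V_a,V_b)$ with $\mu(V_a)>\mu(V_b)$ together with the (anti)symmetric self-paired blocks, and observe that each has vanishing $H^0$ directly from semistability of source and target. Your write-up spells out the slope bookkeeping more explicitly than the paper's two-line argument, but the content is identical.
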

\begin{proof}
As in the last proposition the vector bundle $\cP_P\times^P \Lie(G)/\Lie(P)$ has a filtration such that the subquotients are either isomorphic to $\Hom(\cE_j/{\cE_{j-1}},\cE_{i}/\cE_{i-1})$ with $i<j$ or to the subsheaves of symmetric or antisymmetric morphisms in $\Hom((\cE_i/\cE_{i-1})^\vee,\cE_i/\cE_{i-1})\tensor \cL^\vee$. Thus none of these subquotients has a global section.
\end{proof}

This proves part (1) of our theorem.

\section{The low height argument for exceptional groups}\label{exceptional}

Let $G$ denote a semisimple split group over $k$. Again let $P\subset G$ be a parabolic subgroup, $\cP$ a principal $G$-bundle on $C$ and assume that the canonical reduction of $\cP$ is given by a reduction $\cP_P$ of $\cP$ to $P$.

In order to prove that $H^0(C,\cP_P\times^P \Lie(G)/\Lie(P))=0$ we may assume that $\cP_P$ is actually induced from a Levi subgroup $L$ of $P$, since as noted in Section \ref{groupschemes} the $P$-module $\cg/\cp=\Lie(G)/\Lie(P)$ has a filtration $\cu_{i+1}^- \subset \cu_i^- \subset \dots \subset \cg/\cp$ such that the representations of $P$ on the subquotients factor through $P/R_u(P)\cong L$. Thus from now on we replace $\cP_P$ by $\cP_L:=\cP_P\times^P L$, which is a semistable $L$-bundle.
Since $\cP_P$ is the canonical reduction of $\cP$, the vector bundles $\cP_L\times^L \cu_{i}^-/\cu_{i+1}^-$ have negative degree. To prove the conjecture it is therefore sufficient to show that these bundles are semistable. 

Since $\cP_L$ is semistable, we want to apply \cite{IMP}, Theorem 3.1, which tells us that $\cP_L\times^L \cu_{i}^-/\cu_{i+1}^-$ is semistable if the representation of $L$ on $\cu_{i}^-/\cu_{i+1}^-$ is of low height. 

Recall that the height $\Ht(V)$ of a representation $V$ of a reductive group $L$ is defined as follows: Fix a maximal torus $T$ in $L$. This defines a set of roots $\Phi_L\subset X^*(T)$. For any root $\alpha$ there is a coroot $\alpha^\vee \subset X_*(T)$, such that the reflection $s_\alpha$ is given by $s_\alpha(\beta)= \beta - \langle\beta,\alpha^\vee\rangle\alpha$.

Choose a Borel subgroup $B\subset G$ containing $T$. This defines a set of positive roots $\Phi^+_L$ and a set of positive simple roots $\{\alpha_1,\dots \alpha_s\}$ where $s$ is the semisimple rank of $L$. Then one defines:
\begin{align*}
\Ht_L(V) &= \max\{n_L(\lambda) | \lambda  \text{ is a dominant weight in }V\}\text{, where}\\
n_L(\lambda) &= \sum_{\alpha\in \Phi^+_L} \langle \lambda,\alpha^\vee\rangle.
\end{align*}
We want to compute the height of representations of $L$ occurring in $\Lie(G)/\Lie(L)$. Here we may choose a maximal torus $T\subset L \subset G$. Then $\Phi_L^+\subset \Phi_G^+$ and we number the simple roots $\alpha_1,\dots \alpha_r$ of $G$ such that $\alpha_1,\dots \alpha_s$ are the simple roots of $L$.  Note:
\begin{enumerate}
	\item If $\lambda=\sum_{i=1}^s r_i\alpha_i$ with $r_i\in \bQ$ then $n_L(\lambda)=2 \sum_{i=1}^s r_i$ 
	\item Denote by $\omega_1,\dots \omega_r$ the fundamental weights i.e., the dual basis of $\alpha_i^\vee$. Then for $i>s$ we have $n_L(\omega_i)=0$.
\end{enumerate}
All this follows from $\sum_{\alpha\in \Phi_L^+} \alpha = \sum_{i=1}^s \omega_i$ (\cite{bourbaki}, Appendice). 
Thus if we know $\omega_i=\sum_{i=1}^r n_{ij} \alpha_j$ and a weight $\lambda=\sum_{i=1}^r r_i \alpha_i \in X^*(T)$ we can compute $n_L(\lambda)$ easily. 

In order to prove parts 4,5,6 of the main theorem we just look at the tables for the root systems of the exceptional groups in Bourbaki \cite{bourbaki}, Appendice. 

Given a subset $S$ of the simple roots of $G$ let $L_S$ be the Levi subgroup generated by the root subgroups $\{U_{\pm \alpha}\}_{\alpha\not\in S}$ and let $P$ be the corresponding parabolic subgroup. Then $\Lie(G)/\Lie(P)$ is the sum of those root spaces $u_\alpha$ for which $\alpha=\sum k_i\alpha_i$ is negative and for some $\alpha_i\in S$ we have $k_i<0$. The subquotients of the representation are given by the sum over those $u_\alpha$ with $\alpha=\sum k_i\alpha_i$ which have the same coefficients $k_i$ for $\alpha_i\in S$.

In order to reduce the amount of necessary computations we observe furthermore that we may restrict to the case of maximal parabolic subgroups. (One can either see this directly, e.g.\ using the  formulas of for the fundamental weights in \cite{bourbaki}, or apply the result of \cite{BiswasHolla}, Corollary 6.4 that the height of a representation does not increase when one restricts to Levi subgroups).
We set  $|\omega_i|:= \frac{\sum_{j=1}^r n_{ij}}{n_{ii}}$, such that
\begin{equation}\label{hoehe}
n_{L_{\alpha_k}} (\sum r_i \alpha_i)= 2 (\sum r_i  - r_k |\omega_k|).
\end{equation}
In the following we list the numbers $|\omega_i|$, the coefficients of the root giving the highest weight of the representation of maximal height of $L_{\alpha_i}$ occurring in $\Lie(G)/\Lie(P)$ and the corresponding $n_{L_{\alpha_i}}$. These numbers can be read off the tables in \cite{bourbaki}. In particular, once $|\omega_i|$ is known, it is easy to compare the heights of the highest weights in the subquotients $\cu_j^-/\cu_{j+1}^-$, so we only list the weight giving the maximal height. We will use the same numbering as in \cite{bourbaki}.

{\em Case $G=E_6$:}
$$(|\omega_1|,\dots |\omega_6|)= (6,\frac{11}{2},\frac{9}{2},\frac{7}{2},\frac{9}{2},6).$$
Using the formula \ref{hoehe} we find that the roots defining the highest weight of the representations of maximal height of $L_{\alpha_i}$ are:
$$((122321),(112321),(111221),(111111),(112211),(122321)).$$

The maximal height of a representation of $L_{\alpha_i}$ is thus given by 
\begin{align*}
 (\Ht_{L_{\alpha_i}}) &= 2\cdot (11-6,10-\frac{11}{2},8-\frac{9}{2},6-\frac{7}{2},8-\frac{9}{2},11-6) \\
                  &= (10,9,7,5,7,10).
\end{align*}
So we find $\Ht_{L_{\alpha_i}}\leq 10$.

{\em Case $G= E_7$:} 
$$ (|\omega_1|,\dots |\omega_7|)= (\frac{17}{2},7,\frac{11}{2},4,5,\frac{13}{2},9).$$
The roots defining the highest weight of the representations of maximal height of $L_{\alpha_i}$ are:
$$((1234321),(1123321),(1112221),(1111111),(1122111),(1223211),(2234321)).$$
The maximal height of a representation of $L_{\alpha_i}$ is thus given by 
\begin{align*}
 (\Ht_{L_{\alpha_i}}) &= 2\cdot (16-\frac{17}{2},13-7,10-\frac{11}{2},7-4 ,9-5,12-\frac{13}{2}, 17-9) \\
                  &= (16,12,9,6,8,11,16).
\end{align*}
So we find $\Ht_{L_{\alpha_i}}\leq 16$.

{\em Case $G=E_8$:}
$$(|\omega_1|,\dots |\omega_8|)= (\frac{23}{2},\frac{17}{2},\frac{13}{2},\frac{9}{2},\frac{11}{2},7,\frac{9}{2},\frac{29}{2}).$$
The roots defining the highest weight of the representations of maximal height of $L_{\alpha_i}$ are:
$$((13354321),(11233321),(11122221),(11222221),(11221111),$$
$$(12232111),(23465421),(23465431)).$$
The maximal height of a representation of $L_{\alpha_i}$ is therefore given by
\begin{align*}
 (\Ht_{L_{\alpha_i}}) &= 2\cdot (22-\frac{23}{2},16-\frac{17}{2},12-\frac{13}{2},13-9,10-\frac{11}{2},13-7,27-9,28-\frac{29}{2}) \\
                  &= (21,15,11,8,9,12,36,27).
\end{align*}
So we find $\Ht_{L_{\alpha_i}}\leq 36$.

{\em Case $G=F_4$:}
$$(|\omega_1|,\dots |\omega_4|)= (\frac{11}{2},\frac{7}{2},\frac{5}{2},4).$$
The roots defining the highest weight of the representations of maximal height of $L_{\alpha_i}$ are:
$$((1342),(1122),(1222),(1231)).$$
The maximal height of a representation of $L_{\alpha_i}$ is therefore given by 
\begin{align*}
 (\Ht_{L_{\alpha_i}}) &= 2\cdot (10-\frac{11}{2},6-\frac{7}{2},7-5,7-4) \\
                  &= (9,5,4,6).
\end{align*}
So we find $\Ht_{L_{\alpha_i}}\leq 9$.

Note however that the corresponding bounds for the classical groups are far from optimal and even for $G_2$ the above argument excludes characteristic $3$, for which we give a direct argument in the next section.

\section{The case $G=G_2$}\label{G2}

The construction of the counterexample for $G_2$ in characteristic $2$ will use the same analysis as before. We look for stable bundles for some Levi subgroup of $G_2$ such that the vector bundles associated to representations in $\Lie(G_2)/\Lie(L)$ get very unstable i.e., they have negative degree and global sections. We will find Frobenius twists of representations of $L$ in $\Lie(G_2)/\Lie(L)$ and thus we will look for bundles destabilized by Frobenius pull-back.  Such bundles were first constructed by Raynaud \cite{Raynaud}, and then much studied. We will need the following Lemma, in which we assume $k=\bF_2$ for simplicity, so that we don't have to distinguish different Frobenius morphisms:
\begin{lemma}\label{instabil}
Let $C/\bF_2$ be a smooth projective curve of genus $g>1$ and denote by $F:C \to C$ the Frobenius morphism.
Then $\cE:=F_*(\Omega_C^{-1})$ is a stable vector bundle of rank $2$ and $\deg(\cE)=1-g$ on $C$, such that $H^0(C,F^*(\cE)^\vee\tensor \det(\cE) \tensor \cE)\neq 0$.
\end{lemma}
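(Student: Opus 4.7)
The plan is to verify the four assertions in sequence. \textit{Rank and degree:} since $F$ has degree $2$, $\cE=F_*\Omega_C^{-1}$ is locally free of rank~$2$, and because $F$ is affine, $\chi(C,\cE)=\chi(C,\Omega_C^{-1})$, so Riemann--Roch immediately gives $\deg\cE=(2-2g)+(g-1)=1-g$. \textit{Stability:} given a line subbundle $\cM\subset\cE$, Frobenius adjunction produces a nonzero and hence injective map $F^*\cM\hookrightarrow\Omega_C^{-1}$, whence $2\deg\cM\leq 2-2g$, i.e.\ $\deg\cM\leq 1-g<(1-g)/2=\mu(\cE)$ since $g>1$.

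For the \textit{non-vanishing of $H^0$}, I would rewrite the space as $\Hom(F^*\cE,\cE\otimes\det\cE)$ and analyse it using the counit short exact sequence
$$0\to\cN\to F^*\cE\to\Omega_C^{-1}\to 0,$$
whose kernel $\cN$ is a line bundle. Taking determinants and using $F^*\cL\cong\cL^{\otimes 2}$ for line bundles in characteristic~$2$, one gets $(\det\cE)^{\otimes 2}=\cN\otimes\Omega_C^{-1}$, so in particular $\deg\cN=0$. The bundle $\cE\otimes\det\cE$ is stable of slope $3(1-g)/2<0$, so any nonzero map from the degree-$0$ line bundle $\cN$ to it would yield a subbundle of non-negative slope, contradicting stability. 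Hence every map $F^*\cE\to\cE\otimes\det\cE$ factors through $\Omega_C^{-1}$, and
$$\Hom(F^*\cE,\cE\otimes\det\cE)=H^0(C,\Omega_C\otimes\cE\otimes\det\cE).$$
Two applications of the projection formula (again using $F^*\cL\cong\cL^{\otimes 2}$) then simplify this to $H^0(C,\Omega_C\otimes(\det\cE)^{\otimes 2})$, which coincides with $H^0(C,\cN)$. So the task reduces to showing that this degree-$0$ line bundle is trivial, equivalently $(\det\cE)^{\otimes 2}\cong\Omega_C^{-1}$.

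To identify $\det\cE$, I plan to exploit two natural short exact sequences with middle term $\cE$. Grothendieck--Serre duality for $F$ gives $F^!\cO_C\cong\Omega_C^{-1}$ (since $F^*\Omega_C\cong\Omega_C^{\otimes 2}$) and hence a canonical isomorphism $\cE=F_*\Omega_C^{-1}\cong(F_*\cO_C)^\vee$. Dualizing the Frobenius unit sequence $0\to\cO_C\to F_*\cO_C\to B^1\to 0$, with $B^1=dF_*\cO_C\subset F_*\Omega_C^1$ the line bundle of locally exact forms (of degree $g-1$), produces
$$0\to(B^1)^{-1}\to\cE\to\cO_C\to 0.$$
Alternatively, tensoring the Cartier sequence $0\to B^1\to F_*\Omega_C\to\Omega_C\to 0$ with $\Omega_C^{-1}$ and simplifying via $F_*\Omega_C\otimes\Omega_C^{-1}\cong F_*\Omega_C^{-1}=\cE$ yields
$$0\to B^1\otimes\Omega_C^{-1}\to\cE\to\cO_C\to 0.$$
Grothendieck duality also shows $\Hom(\cE,\cO_C)=\Hom(\Omega_C^{-1},\Omega_C^{-1})=k$, so the two surjections differ by a nonzero scalar, forcing the kernels to agree: $(B^1)^{-1}\cong B^1\otimes\Omega_C^{-1}$, i.e.\ $(B^1)^{\otimes 2}\cong\Omega_C$. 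Since either sequence identifies $\det\cE=(B^1)^{-1}$, we conclude $(\det\cE)^{\otimes 2}\cong\Omega_C^{-1}$, finishing the proof.

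The main obstacle is the very last step: the isomorphism $(B^1)^{\otimes 2}\cong\Omega_C$ is a characteristic-$2$ phenomenon (it asserts that $B^1$ is a theta-characteristic) and is precisely the special structure that generates the section responsible for the counterexample to Behrend's conjecture in $G_2$.
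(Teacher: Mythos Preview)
Your proof is correct and, at bottom, uses the same ingredients as the paper's: the line bundle $B:=F_*\cO_C/\cO_C$ (your $B^1$), the identification $\det\cE\cong B^{-1}$, the identity $B^{\otimes 2}\cong\Omega_C$, and the two short exact sequences $0\to B^{-1}\to\cE\to\cO_C\to 0$ and $0\to\cO_C\to F^*\cE\to\Omega_C^{-1}\to 0$. The differences are in presentation and level of self-containment. For stability, the paper simply cites \cite{Ramananetal,LangePauly,Sun}; your adjunction argument is a pleasant elementary substitute. For the section, the paper proceeds constructively: it just composes the counit surjection $F^*\cE\twoheadrightarrow\Omega_C^{-1}\cong B^{-1}\otimes\det\cE$ with the inclusion $B^{-1}\otimes\det\cE\hookrightarrow\cE\otimes\det\cE$ coming from the first sequence. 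You instead compute the whole space $\Hom(F^*\cE,\cE\otimes\det\cE)$, reduce it to $H^0(\cN)$ via stability and the projection formula, and then prove $\cN\cong\cO_C$. This last step is where you supply a genuine argument the paper suppresses: the paper asserts $B^{\otimes 2}=\Omega_C$ (equivalently, that the kernel of the counit is trivial) as known, whereas you deduce it neatly from the uniqueness of the quotient map $\cE\to\cO_C$. So your route is slightly longer but more self-contained; the paper's is quicker once one grants the theta-characteristic identity.
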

\begin{proof}
The bundle $\cE:= F_* (\Omega_C^{-1})$ is known to be stable by \cite{Ramananetal}, \cite{LangePauly} or \cite{Sun}, where it is shown that $F_*$ preserves stability. 
Denote $B:=F_*(\cO_C)/\cO_C$.  Then $F^*(B)=B^{\tensor 2}=\Omega_C$.
Now $\det(\cE)=\det(F_*F^*(B^{-1}))=\det(F_*(\cO_C)\tensor B^{-1})=B^{-1}$, in particular $\deg(\cE)=1-g$.
Therefore the adjunction morphisms define extensions:
$$0\to B^{-1} \to F_*F^*(B^{-1})=\cE \to \cO_C\to 0$$
$$0\to \cO_C \to F^*(\cE)=F^*F_*(\Omega_C^{-1}) \tto \Omega_C^{-1}\to 0.$$
In particular the composition $F^*(\cE)\to \Omega_C^{-1}=B^{-1} \tensor \det(\cE) \to \cE \tensor \det(\cE)$ defines a non-zero global section of $F^*(\cE)^\vee \tensor \cE\tensor \det{\cE}$. 
\end{proof}


To proceed we need an explicit description of $G_2$.
Chevalley \cite{ChevalleyG2} constructed $G_2\subset \GL_7$ (resp.\ $G_2\subset \GL_6$ if $\Char(k)=2$) as the subgroup generated by two copies of $\SL_2$, which are embedded as follows:

Let $W$ be the 2-dimensional vector space with basis $w_1,w_2$ and denote by $W^{(2)}$ the second symmetric power of $W$. Write $V= k^7 = W^\vee \oplus W^{(2)} \oplus W$. The natural action of $\SL_2\cong \SL(W)$ on $V$ defines a subgroup $Z_1\subset \GL(V)$. The basis $(w_2^\vee,w_1^\vee,w_1^2,w_1w_2,w_2^2,w_1,w_2)$ of $V$ will be a basis of weight vectors for the irreducible representation of highest weight $(2,1)$ of $G_2$. 

Next define $N:=k w_2^2 \oplus k w_1$ and $N^\vee:= k w_1^2 \oplus k w_1^\vee$ (i.e., $w_1^2,w_1^\vee$ is the dual basis of $w_2^2,w_1$). The natural action of $\SL_2=\SL(N)$ on $k\oplus N^\vee \oplus k \oplus  N \oplus k$ defines the subgroup $Z_2\subset \GL_7$. With this definition $G_2\subset \GL_7$ is the subgroup generated by $Z_1,Z_2$. 

A maximal torus $T=\bG_m^2$ of $G_2$ is given by the product of the tori in $Z_1,Z_2$, which are embedded into $\GL_7$ as diagonal matrices $(a,a^{-1},a^2,1,a^{-2},a,a^{-1}) \subset Z_1$, and $(1,b^{-1},b,1,b^{-1},b,1)\subset Z_2$.
In particular each of the $Z_i$ together with this torus generates a copy of $\GL_2\subset G_2\subset \GL_7$. In terms of matrices these representations are given as follows. Let $A\in \GL_2=Z_1$ and $B\in \GL_2=Z_2$, write $A^{(2)}$ for the symmetric square of a matrix and $A^c:=\left( {01\atop 10}\right)A^{t,{-1}}\left({01\atop 10} \right)$. Then the images of $A,B$ in $\GL_7$ are:
$$\left(\begin{array}{ccc} A^c & & \\ & A^{(2)}\det{A}^{-1} & \\ & & A \end{array}\right), \left( \begin{array}{ccccc} \det{B}^{-1} & & & & \\ & B^c & & & \\ & & 1 & & \\ & & & B & \\ & & & & \det{B} \end{array}\right).$$

In characteristic $2$ we have to replace $W^{(2)}$ by the Frobenius twist $W^{[2]}$ (i.e., we remove $kw_1w_2$ from the vector space $V$) and to replace $A^{(2)}$ by $A^{[2]}= F^*A$ in the above formula. Thus in characteristic $2$ the Levi subgroups of $G_2\subset \GL_6$ are:
 $$\left(\begin{array}{ccc} A^c & & \\ & A^{[2]}\det{A}^{-1} & \\ & & A \end{array}\right), \left( \begin{array}{cccc} \det{B}^{-1}  & & & \\ & B^c  & & \\  & & B & \\ & & & \det{B} \end{array}\right).$$

The intersection of $G_2$ with the upper triangular matrices in $\GL_7$ (resp.\ $\GL_6$) defines a Borel subgroup $B$ of $G_2$. We denote the standard parabolic subgroups of $G_2$ by $P_i= Z_iB$ which contain Levi subgroups $L_i:=TZ_i$.

This defines the simple roots $\alpha_1,\alpha_2$ of $G_2$.  The weights of the representation on $V$ will be $(2,1),(1,1),(1,0),0,(-1,0),(-1,-1),(-2,-1)$ (and the weight $0$ does not occur in characteristic $2$). 

In particular let $\cq$ be the Lie algebra given by the block-matrices fixing the filtration $W^\vee \subset W^\vee \oplus W^{(2)}\subset V$. Since we know the weights of $T$ on $V$, we see that $\Lie(P_1)=\cq\cap \Lie(G_2)$ and thus the representation of $L_1$ on $\Lie(G_2)/\Lie(P_1)$ is a subrepresentation of $\gl_n/\cq$. 
In characteristic $2$ we see from the explicit description of $L_1$ above that the latter representation is the direct sum of three 4-dimensional representations: $W\tensor W^{[2]}\tensor \det(W)^{-1}$, $(W^{[2]})^\vee\tensor \det(W) \tensor W$  and the representation $W^{\tensor 2}$. These first two are isomorphic, because $(W^{[2]})^\vee=W^{[2]}\tensor \det(W^{[2]})^{-1}$ and $\det(W^{[2]})=\det(W)^2$. 

The weights of $W^{\tensor 2}$ are $-(4,2),-(3,2),-(3,2),-(2,2)$ and the weights of the representation $ W\tensor W^{[2]} \tensor \det(W)^{-1}$ are $-(3,1),-(2,1),-(1,1),-(0,1)$.

The weights occurring in the direct summands of $\Lie(G_2)/\Lie(P_1)$ are $-(3,2)$ and $-(3,1),-(2,1),-(1,1),-(0,1)$.

Therefore the representation of $L_1$ on $\Lie(G_2)/\Lie(P_1)$ has to be isomorphic to $\det(W) \oplus (W^{[2]})^\vee \tensor \det(W) \tensor W$ (the first summand being a subrepresentation of $W^{\tensor 2}$).

Now any semistable vector bundle $\cE$ of rank $2$ and $\deg(\cE)<0$ on $C$ can be considered as an $L_1$-bundle $\cP_{L_1}$. The induced $G_2$-bundle is obtained from a $P_1$-bundle, which in turn defines the canonical reduction because the $L_1$-bundle is semistable and the degree of $\cP_{L_1} \times^{L_1} \Lie(G_2)/\Lie(P_1)$ is negative.

In particular, taking the bundle $\cE$ form Lemma \ref{instabil}, we get a $G_2$-bundle contradicting Behrend's conjecture.

However, if the characteristic of $k$ is not $2$ then the second symmetric power $\cE^{(2)}$ of any semistable vector bundle of rank $2$ is again semistable, because the height of the corresponding representation of $\GL_2$ is $2$. Thus we see that if the canonical reduction is a reduction to $P_1$ then Behrend's conjecture holds. For the parabolic subgroup $P_2$ the same argument works in arbitrary characteristic because the symmetric square does not appear in the corresponding Lie algebra representation.

This proves parts (2) and (3) of our theorem.

\begin{remark}
The above counterexample to Behrend's conjecture can also be used to construct an unstable $G_2$-bundle for which the canonical reduction is only defined after an inseparable base extension. To explain this we need to fix some notation: Again let $\cE$ be the vector bundle constructed in Lemma \ref{instabil}. As above, we denote by $\cP_{L_1}$ the corresponding $L_1\cong \GL_2$-bundle, by $\cP_{P_1}$ the induced $P_1$-bundle and by $\cP$ the induced unstable $G_2$-bundle.

Since $\cP_{L_1}$ is stable, $\Aut(\cP_{L_1})=\bG_m$ and therefore $\Aut(\cP_{P_1})\cong U\rtimes \bG_m$, where $U$ is a unipotent group, which admits a filtration such that the subquotients are additive groups $\bG_a$. Finally we have $\Aut(\cP_{P_1})\subset \Aut(\cP)$. As explained in Section \ref{groupschemes} the map $\Bun_{P_1} \to \Bun_{G_2}$ (which maps a $P_1$-bundle to the induced $G_2$-bundle) is radiciel at $\cP_{P_1}$ i.e., the map $B\Aut(\cP_{P_1})\to B\Aut(\cP)$ is radiciel (we denote by $BG$ the stack of $G$-torsors). The fiber of this map over the trivial $\Aut(\cP)$-bundle is isomorphic to $\Aut(\cP)/\Aut(\cP_{P_1})$ and since the morphism is radiciel this must be zero-dimensional. Therefore $\Aut(\cP)_{\text{red}}\cong\Aut(\cP_{P_1})$. We claim that $\Aut(\cP)\cong \Aut(\cP_{P_1})\ltimes \alpha_2$, where $\alpha_2\cong \Spec(k[t]/t^2)$ is the infinitesimal additive group scheme. Before indicating a proof of this claim, let us use this to construct the instable $G_2$-bundle we are looking for. Any $\alpha_2$-torsor $S^\prime\map{p} S=\Spec(k(t))$ defines a bundle $\cP_{p}:=\cP\times^{\alpha_2} S^\prime$ over $C\times S$, which corresponds to the composition $$S\map{p}B\alpha_2\to  B\Aut(\cP)\to \Bun_{G_2}.$$ Now if the canonical reduction of $\cP_p$ were defined over $S$ this would imply that we could lift $p$ to $B\Aut(\cP_{P_1})\subset \Bun_{P_1}$. However $H^1(S,\Aut(\cP_{P_1}))=0$ and since $\Aut(\cP)$ is a semidirect product the map $H^1(S,\alpha_2)\incl{} H^1(S,\Aut(\cP))$ is injective. Thus, the canonical reduction is not defined over $S$. 

Finally, to compute $\Aut(\cP)$ we use the embedding $\rho:G_2\to \GL_6$ once more. Let $\cE_{\rho}$ be the rank $6$ vector bundle induced from $\cP$.  The explicit description of $L_1\cong\GL_2\subset \GL_6$ allows us to depict the automorphisms of $\cE_\rho$ by matrices with entries in homomorphism groups:
$$\left(\begin{array}{ccc}
\End(\cE^\vee) & \Hom(F^*\cE\tensor B,\cE^\vee) & \Hom(\cE,\cE^\vee)\\
\Hom(\cE^\vee,F^*\cE\tensor B)  & \End(F^*\cE\tensor B) & \Hom(\cE,F^*(\cE)\tensor B)\\
\Hom(\cE^\vee,\cE)  & \Hom(F^*\cE\tensor B,\cE) & \End(\cE) 
        \end{array}\right)$$
Denote by $\Aut(\cE_{\rho,\text{filt}})$ the upper block matrices in the above description. Since $P_1=L_1\cdot B$ we have an embedding $\Aut(\cP)/\Aut(\cP_{P_1})\to \Aut(\cE_\rho)/\Aut(\cE_{\rho,\text{filt}})$. Furthermore $\Aut(\cE_\rho)/\Aut(\cE_{\rho,\text{filt}})\cong \Hom(\cE^\vee,F^*(\cE)\tensor B) \oplus \Hom(F^*(\cE)\tensor B^{-1},\cE)$.

Note that $\cE$ was defined as an extension $B^{-1} \to \cE \to \cO_C$, so the bundle $\cP_{P_1}$ has a reduction to $B$, in particular $\cE_{\rho}$ has a natural full flag of subbundles $\cE_{\rho,\bullet}$.
Now, choosing a trivialization of $\cE$ at the generic point of $C$ we can describe the automorphisms of $\cP$ by a subgroup of $G_2(k(C))\subset \GL_6(k(C))$. From the above description of $\Aut(\cE_{\rho})$ it is not difficult to calculate the intersection of the the root subgroups of $G_2(k(C))$ and $\Aut(\cE_\rho)$. The only nontrivial intersection is obtained from the subgroup $U_{(-2,-1)}$ of $G_2$, which is given by the matrices of the form 
$$\left(\begin{array}{cccccc}
1     & 0 & 0 & 0 & 0 & 0\\
0     & 1 & 0 & 0 & 0 & 0\\
0     & 0 & 1 & 0 & 0 & 0\\
0     & a & 0 & 1 & 0 & 0\\
0     & 0 & a & 0 & 1 & 0\\
a^2 & 0 & 0 & 0 & 0 & 1 
\end{array}\right).$$
The intersection $U_{(-2,-1)}(k(C))\cap\Aut(\cE_\rho)$ is isomorphic to $\alpha_2$ and it is generated by the non-zero element in $\Hom(F^*(\cE)\tensor B,\cE)$, constructed in Lemma \ref{instabil}.
\end{remark}


\begin{thebibliography}{B}
\bibitem{BehrendSemistable}
K.A.~Behrend.
\newblock Semi-stability of reductive group schemes over curves.
\newblock {\em Math.\ Ann.} \bf 301 \rm (1995), 281-305.
\bibitem{BehrendDhillon}
K.A.~Behrend and A.\ Dhillon.
\newblock Connected components of moduli stacks of torsors via Tamagawa numbers.
\newblock {\tt arXiv:math.NT/0503383}, 31 pp.
\bibitem{BiswasHolla}
I.~Biswas and Y.~Holla.
\newblock Harder-Narasimhan reduction of a principal bundle.
\newblock {\em Nagoya Math. J.} \bf 174, \rm (2004) 201--223.
\bibitem{Borel}
A.~Borel.
\newblock Linear algebraic groups.
\newblock {\em Graduate Texts in Mathematics} \bf 106, \rm Second Edition, Springer-Verlag, (1991)
\bibitem{bourbaki}
N.~Bourbaki.
\newblock Groupes et alg\`ebres de Lie, Chapitres 4, 5 et 6.
\newblock Masson 1981. 
\bibitem{ChevalleyG2}
C.~Chevalley.
\newblock Les Groupes de type $G_2.$
\newblock S\'eminaire Claude Chevalley, \bf tome 2 \rm (1956-1958) exp. 21, 1-14, reprinted in: C.~Chevalley. Classification des Groupes Alg\'ebriques Semi-simples, Springer--Verlag 2005.
\bibitem{SGA3}
M.~Demazure and A.~Grothendieck.
\newblock SGA 3: Sch\'emas en Groupes III.
\newblock {\em Springer Lecture Notes in Math.} 153, Springer--Verlag 1970.
\bibitem{GLSS}
T. L.~Gomez, A. Langer, A. H. W.~Schmitt and I. Sols.
\newblock Moduli Spaces for Principal Bundles in Arbitrary Characteristic.
\newblock Preprint, {\tt math.AG/0506511}
\bibitem{semistable}
J.~Heinloth.
\newblock Semistable reduction for $G$-bundles over curves.
\newblock {\em J. Alg.Geom.} \bf 17,\rm (2008) 167--183.
\bibitem{IMP}
S.~Ilangovan, V.B.~Mehta and A.J.~Parameswaran.
\newblock Semistability and semisimplicity in representations of low height in positive characteristic.
\newblock in \it A tribute to C. S. Seshadri \rm (Chennai, 2002), Birkh\"auser 2003, 271-282.
\bibitem{LangePauly}
H.~Lange and C.~Pauly.
\newblock On Frobenius-destabilized rank-2 vector bundles over curves.
\newblock preprint (2006) to appear in {\em Comm. Math. Helv.}
\bibitem{Ramananetal}
K.~Joshi, S.~Ramanan, E.Z.~Xia and J.K.~Yu.
\newblock On vector bundles destabilized by Frobenius pull-back.
\newblock {\em Compositio Math.} \bf 142 \rm (2006), 616-630.
\bibitem{Ramanan}
S.~Ramanan. 
\newblock Orthogonal and spin bundles over hyperelliptic curves.  
\newblock {\em Proc. Indian Acad. Sci. Math. Sci.} \bf 90  \rm (1981), no. 2, 151--166.
\bibitem{Raynaud}
M.~Raynaud.
\newblock Sections des fibr\'es vectoriels sur une courbe.
\newblock {\em Bull. Soc. Math. France} \bf 110  \rm(1982), no. 1, 103--125. 
\bibitem{Subramanian}
S.~Subramanian.
\newblock On principal $G_2$ bundles.
\newblock {\em Asian J. Math. 3}  (1999), no. 2, 353--357. 
\bibitem{Sun}
X.~Sun.
\newblock Direct images of bundles under Frobenius morphisms.
\newblock preprint (2006), to appear in {\em Invent. Math.}.

\end{thebibliography}
\end{document}